\documentclass[twoside,leqno,10pt]{amsart}
\usepackage{amsfonts}
\usepackage{amsmath}
\usepackage{amscd}
\usepackage{amssymb}
\usepackage{amsthm}
\usepackage{amsrefs}
\usepackage{latexsym}
\usepackage{mathrsfs}
\usepackage{bbm}
\usepackage{enumerate}
\usepackage{color}
\setlength{\textwidth}{18.2cm}
\setlength{\oddsidemargin}{-0.7cm}
\setlength{\evensidemargin}{-0.7cm}
\setlength{\topmargin}{-0.7cm}
\setlength{\headheight}{0cm}
\setlength{\headsep}{0.5cm}
\setlength{\topskip}{0cm}
\setlength{\textheight}{23.9cm}
\setlength{\footskip}{.5cm}

\begin{document}

\newtheorem{theorem}[subsection]{Theorem}
\newtheorem{proposition}[subsection]{Proposition}
\newtheorem{lemma}[subsection]{Lemma}
\newtheorem{corollary}[subsection]{Corollary}
\newtheorem{conjecture}[subsection]{Conjecture}
\newtheorem{prop}[subsection]{Proposition}
\numberwithin{equation}{section}
\newcommand{\mr}{\ensuremath{\mathbb R}}
\newcommand{\mc}{\ensuremath{\mathbb C}}
\newcommand{\dif}{\mathrm{d}}
\newcommand{\intz}{\mathbb{Z}}
\newcommand{\ratq}{\mathbb{Q}}
\newcommand{\natn}{\mathbb{N}}
\newcommand{\comc}{\mathbb{C}}
\newcommand{\rear}{\mathbb{R}}
\newcommand{\prip}{\mathbb{P}}
\newcommand{\uph}{\mathbb{H}}
\newcommand{\fief}{\mathbb{F}}
\newcommand{\majorarc}{\mathfrak{M}}
\newcommand{\minorarc}{\mathfrak{m}}
\newcommand{\sings}{\mathfrak{S}}
\newcommand{\fA}{\ensuremath{\mathfrak A}}
\newcommand{\mn}{\ensuremath{\mathbb N}}
\newcommand{\mq}{\ensuremath{\mathbb Q}}
\newcommand{\half}{\tfrac{1}{2}}
\newcommand{\f}{f\times \chi}
\newcommand{\summ}{\mathop{{\sum}^{\star}}}
\newcommand{\chiq}{\chi \bmod q}
\newcommand{\chidb}{\chi \bmod db}
\newcommand{\chid}{\chi \bmod d}
\newcommand{\sym}{\text{sym}^2}
\newcommand{\hhalf}{\tfrac{1}{2}}
\newcommand{\sumstar}{\sideset{}{^*}\sum}
\newcommand{\sumprime}{\sideset{}{'}\sum}
\newcommand{\sumprimeprime}{\sideset{}{''}\sum}
\newcommand{\shortmod}{\ensuremath{\negthickspace \negthickspace \negthickspace \pmod}}
\newcommand{\V}{V\left(\frac{nm}{q^2}\right)}
\newcommand{\sumi}{\mathop{{\sum}^{\dagger}}}
\newcommand{\mz}{\ensuremath{\mathbb Z}}
\newcommand{\leg}[2]{\left(\frac{#1}{#2}\right)}
\newcommand{\muK}{\mu_{\omega}}

\title[Mean values of Some Hecke characters]{Mean values of some {H}ecke characters}

\date{\today}
\author{Peng Gao and Liangyi Zhao}

\begin{abstract}
In this paper, we evaluate a smoothed character sum involving $\sum_{m}\sum_{n}\leg {m}{n}$, where $\leg{m}{n}$ is a quadratic, cubic or quartic Hecke character, and the two sums over $m$ and $n$ are of comparable lengths.
\end{abstract}

\maketitle

\noindent {\bf Mathematics Subject Classification (2010)}: 11L05, 11L40, 11R11 \newline

\noindent {\bf Keywords}: quadratic Hecke chracter, cubic Hecke character, quartic Hecke character
\section{Introduction}

 Character sums play important roles in number theory. Among various results on character sums, the following powerful large sieve type inequality for quadratic Dirichlet characters is due to D. R. Heath-Brown \cite[Theorem 1]{DRHB}:
\begin{equation}
\label{realfinal}
   \sumstar_{m \leq M} \left| \sumstar_{n \leq N} a_n \Big (\frac {n}{m} \Big ) \right|^2 \ll_{\varepsilon} (MN)^{\varepsilon}(M+N)
   \sumstar_{n \leq N} |a_n|^2,
\end{equation}
   where $(a_n)_{n\in \mathbb{N}}$ is an arbitrary sequence of complex numbers, $\varepsilon>0$, $M, N \geq 1$, the asterisks indicate that $m,n$ run over positive odd square-free
   integers and $(\frac {\cdot}{m})$ is the Jacobi symbol. \newline

The bound in \eqref{realfinal} has a variety of interesting applications, including the studies ranks of elliptic curves \cite{P&P}, mean-value and
zero-density estimates for quadratic Dirichlet $L$-functions \cites{Jutila1, Jutila, DRHB}, non-vanishing of the central value of quadratic Dirichlet $L$-functions \cite{sound1}. \newline

  As large sieve inequalities concern arbitrary sequences in general, better estimations are expected for special sequences. For example, when $a_n=1$ for all $n$,  M. V. Armon \cite[Theorem 1]{Armon} obtained the following mean square estimate for quadratic Dirichlet characters:
\begin{align}
\label{squareJacobi}
   \sum_{\substack {|D| \leq X \\ D \in \mathcal{D}}} \left| \sum_{n \leq Y} \Big (\frac {D}{n} \Big ) \right|^2 \ll XY \log X,
\end{align}
   where $\mathcal{D}$ is the set of non-square quadratic discriminants, $(\frac {D}{\cdot})$ is the Kronecker symbol. This result is better than what one gets by setting $a_n=1$ in \eqref{realfinal} and improves some earlier results of M. Jutila \cites{Jutila, Jutila1}, who obtained the same type of estimates except for higher powers of $\log X$ and applied his results to study the mean-values of class numbers of quadratic imaginary fields in \cite{Jutila1} and the second moment of Dirichlet $L$-functions with primitive quadratic characters at the central point in \cite{Jutila} .
\newline

More general even power moments of Dirichlet character sums similar are considered in \cites{Armon, Virtanen}. For corresponding estimates involving with positive odd integers, the most interesting case is the first power. Observe that in the mean square case, the left-hand side expression in \eqref{squareJacobi} involved in invariant with or without the absolute value. Therefore, one way to formulate the first moment of the quadratic Dirichlet character sum is to consider the following sum:
\begin{align} \label{CFSsum}
  S(X,Y)=\sum_{\substack {m \leq X \\ (m, 2)=1}}\sum_{\substack {n \leq Y \\ (n, 2)=1}} \leg {m}{n}.
\end{align}

   One obtains an asymptotic formula of $S(X, Y)$ for $Y=o(X/\log X)$ or $X=o(Y/\log Y)$ by a straightforward application of the P\'olya-Vinogradov inequality. Hence, the most intriguing case for evaluating $S(X, Y)$ is when $X$ and $Y$ are of comparable size. This is resolved by J. B. Conrey, D. W. Farmer and K. Soundararajan in \cite{CFS}, where an asymptotic formula of $S(X, Y)$ valid for all $X,Y$ is obtained using a Poisson summation formula developed in \cite{sound1}.

  Motivated by the above result, we study the mean values of some {H}ecke characters in this paper. More specifically, we consider the Gaussian field $K=\mq(i)$. For every element $c$ in the ring of
integers $\mathcal{O}_K = \mz[i]$ satisfying $(c, 1+i)=1$, let $(\frac {\cdot}{c}),  (\frac {\cdot}{c})_4$ be the quadratic and quartic
residue symbols defined in Section \ref{sec2.4}.  We also consider the number field $K_1=\mq(\omega)$ with $\omega=\exp(2 \pi i /3)$. For every element $c$ in the ring of integers $\mathcal{O}_{K_1} = \mz[\omega]$ satisfying $(c, 1-\omega)=1$, let $(\frac {\cdot}{c})_3$ be the cubic
residue symbol defined in Section \ref{sec2.4}. Let $\Phi(t), W(t)$ be two real-valued and non-negative smooth functions compactly supported in $(0,1)$, satisfying $\Phi(t)=W(t)=1$ for $t \in (1/U, 1-1/U)$ and such that
    $\Phi^{(j)}(t), W^{(j)}(t)\ll_j U^j$ for all integers $j \geq 0$. We define
\[  S_j(X,Y) =\sum_{n \equiv 1 \bmod {(1+i)^3}}\sum_{(m, 1+i)=1} \leg {m}{n}_j\Phi \left(\frac {N(n)}{Y} \right) W \left( \frac {N(m)}{X} \right), \quad \mbox{for} \; j=2,4, \]
and
\[  S_3(X,Y) =\sum_{n \equiv 1 \pmod {3}}\sum_{\substack {m \in \mz[\omega] \\ (m,1-\omega)=1}} \leg {m}{n}_3 \Phi\left( \frac {N(n)}{Y} \right) W \left( \frac {N(m)}{X} \right). \]
   where the sums in $S_2$, $S_4$ are over integers in $\mz[i]$ with $\leg {\cdot}{n}_2=(\frac {\cdot}{n})$ and the sums in $S_3$ are over integers in $\mz[\omega]$.   A more detailed discussion of $\leg{m}{\cdot}_j$ will be given in Section 2.  We will simply note here that $\leg{m}{\cdot}_j$ is a Hecke character modulo $16m$ for $j=2,4$ and modulo $9m$ for $j=3$. \newline

   As in the quadratic Dirichlet characters case, one expects to obtain asymptotic formulas for $S_j(X,Y),  j=2,3,4$ in a relatively easy way by applying analogues of the P\'olya-Vinogradov inequality in number fields.   With some minor changes in its proof, Lemma 2 of \cite{H&P} asserts that for $y \geq 1$ and any non-principal Hecke character $\pmod a$ of trivial infinite type,
\begin{equation}
\label{PV}
    \sum_{c \equiv 1 \bmod{ (1+i)^3}} \chi (c)\Phi \left( \frac{N(c)}{y} \right) \ll_{\varepsilon} N(a)^{(1+\varepsilon)/2}.
\end{equation}
(Lemma 2 of \cite{H&P} has a different weight function in its statement.  The proof carries over for our weight function $\Phi$ with some minor changes.)  It is easy to see that $\leg {m}{\cdot}_2$ is non-principal when $m$ is not a square. It follows from this and \eqref{PV} that
 \begin{align*}
   \sum_{n \equiv 1 \bmod {(1+i)^3}}\leg {m}{n}\Phi \left( \frac {N(n)}{Y} \right)=\begin{cases}
   \displaystyle \sum_{\substack {n \equiv 1 \bmod {(1+i)^3} \\ (n,m)=1}}\Phi \left( \frac {N(n)}{Y} \right) \qquad & \text{if $m$ is a square} ,\\
    O\left( N(m)^{(1+\varepsilon)/2} \right) \qquad & \text{otherwise}.
\end{cases}
\end{align*}
Summing over $m$, we deduce that for some constant $C$,
\begin{align*}
  S_2(X,Y) \sim CX^{1/2}Y+O(X^{3/2+\epsilon}).
\end{align*}
   An analogue expression holds when one interchanges the role of $m, n$ by the quadratic reciprocity (see Section \ref{sec2.4}). This leads to asymptotic formulas for $S_2(X,Y)$ when $X^{1+\epsilon} \ll Y$ or $Y^{1+\epsilon} \ll X$. \newline

   The above discussions apply to $S_3(X, Y)$ and $S_4(X, Y)$ as well, from which we see that just as the case of quadratic Dirichlet character sum, it is most challenging to establish asymptotic formulas for $S_j(X, Y)$, $j=2, 3, 4$ when $X$  and $Y$ are of comparable size. It is our goal in this paper to use Poisson summation for number fields to obtain asymptotic formulas for $S_j(X, Y)$, $j=2,3,4$, valid at least for certain comparable $X$, $Y$ (i.e. not covered by the formulas obtained via P\'olya-Vinogradov inequality).  Let $\zeta_{\mq(i)}(s), \zeta_{\mq(\omega)}(s)$ be the Dedekind zeta function of $\mq(i)$ and $\mq(\omega)$, respectively. We fix $U=(X/Y)^{1/2}$. Our result is
\begin{theorem}
\label{quadraticmean}
   For large $X \geq Y$, $\theta=131/416$ and any $\varepsilon>0$, we have
\begin{align*}
  S_j(X,Y) &=\frac {\pi^2 XY^{1/j}}{12\zeta_{\mq(i)}(2)}+O\left( XY^{1/j} \left(\frac {X}{Y} \right)^{-1/2}+XY^{\theta/j}+
   XY^{\varepsilon} \left( \frac {X}{Y} \right)^{1/2} \right), \quad j=2,4, \\
  S_3(X,Y) &=  \frac {\pi^2 XY^{1/3}}{27\zeta_{\mq(\omega)}(2)}+O \left( XY^{1/3} \left( \frac {X}{Y} \right)^{-1/2}+XY^{\theta/3}+XY^{\varepsilon} \left( \frac {X}{Y} \right)^{1/2} \right).
\end{align*}
\end{theorem}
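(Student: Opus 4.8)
The plan is to follow the template set by Conrey–Farmer–Soundararajan for $S(X,Y)$, adapted to the Gaussian and Eisenstein fields, exploiting $n$-reciprocity and a Poisson summation formula for number fields. I would treat $j=2,4$ in parallel (over $\mz[i]$) and $j=3$ in complete analogy (over $\mz[\omega]$); I describe the $j=2,4$ case. First I fix the ``long'' sum to be the one over $m$ (since $X\geq Y$, the sum $\sum_m \left(\frac{m}{n}\right)_j$ has conductor $\asymp N(n)$, which is the smaller modulus) and apply Poisson summation in the $m$-variable. Concretely, writing the character $\left(\frac{\cdot}{n}\right)_j$ as a residue symbol modulo (a power of $(1+i)$ times) $n$, one obtains
\[
\sum_{(m,1+i)=1}\leg{m}{n}_j W\!\left(\frac{N(m)}{X}\right)
= \frac{X}{N(n)\,N((1+i)^3)}\sum_{k\in\mz[i]} G_j(k,n)\,\widehat{W}\!\left(\frac{k\sqrt{X}}{\text{(modulus)}}\right),
\]
where $G_j(k,n)$ is a Gauss-type sum attached to the residue symbol and $\widehat{W}$ is a suitable (Hankel-type) Fourier transform of $W$ over $\mc$, rapidly decaying. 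Isolating the $k=0$ term gives the main term, and the $k\neq 0$ terms form the error to be bounded.

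The $k=0$ contribution is $\frac{X}{N(n)\,N((1+i)^3)}\,G_j(0,n)\,\widehat{W}(0)$, and $G_j(0,n)$ vanishes unless $n$ is (up to units and powers of $1+i$) a $j$-th power, say $n=\ell^j$. Summing the surviving terms over $n$ with the weight $\Phi(N(n)/Y)$ produces $\widehat{W}(0)\widehat{\Phi}$-type constants times $\sum_{\ell}\frac{1}{N(\ell)^j}\cdot(\text{something of size }Y^{1/j})$; collecting the local factors and the coprimality condition $(n,m)=1$ (inherited as a congruence/divisibility condition) against the Euler product for $\zeta_{\mq(i)}(s)$ yields exactly $\frac{\pi^2 XY^{1/j}}{12\,\zeta_{\mq(i)}(2)}$. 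This is a bookkeeping computation: the constant $\pi^2/12$ is $\operatorname{Res}_{s=1}\zeta_{\mq(i)}(s)$ divided by the index of the relevant congruence subgroup, and the $Y^{1/j}$ reflects that only $j$-th powers $n=\ell^j$ up to $N(n)\leq Y$ survive.

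The main obstacle, and the source of all three error terms, is bounding the dual sum over $k\neq 0$ after summing over $n$:
\[
\frac{X}{N((1+i)^3)}\sum_{n\equiv 1}\frac{\Phi(N(n)/Y)}{N(n)}\sum_{k\neq 0} G_j(k,n)\,\widehat{W}\!\left(\frac{k\sqrt X}{c(n)}\right).
\]
Here I would split $k$ according to whether its ``$n$-part'' makes $G_j(k,n)$ large; the key input is a precise evaluation of $G_j(k,n)$ (multiplicativity in $n$, and for prime $n=\pi$ it is a Gauss sum of modulus $N(\pi)^{1/2}$ when $\pi\nmid k$, with a cruder bound $\ll N(\pi)$ when $\pi\mid k$). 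Because $\widehat{W}$ localizes $k$ to $N(k)\ll N(n)^{2}/X \cdot U^{O(1)}$, and $N(n)\ll Y$, the generic range is nonempty precisely when $X\not\gg Y^{1+\varepsilon}$, i.e. exactly the comparable range of interest. After inserting the Gauss-sum evaluation, the sum over $k$ and $n$ becomes a character sum over $\mz[i]$ to which one applies quadratic (resp. quartic/cubic) reciprocity to flip $\left(\frac{k}{n}\right)$ into $\left(\frac{n}{k}\right)$, then executes the $n$-sum via \eqref{PV} (the Pólya–Vinogradov bound in number fields) when the resulting character mod $k$ is nonprincipal. The ``diagonal'' where that character is principal — i.e. $k$ a $j$-th power times the conductor pieces — contributes the term $XY^{1/j}(X/Y)^{-1/2}$; the off-diagonal after Pólya–Vinogradov contributes $XY^{\varepsilon}(X/Y)^{1/2}$; and the term $XY^{\theta/j}$ with $\theta=131/416$ arises from estimating the count of $n\leq Y$ that are perfect $j$-th powers \emph{only up to a bounded factor} (or equivalently from a lattice-point / exponential-sum estimate with the Bourgain–Watt exponent $131/416$ for the Gauss circle–type problem) in the transitional regime. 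I expect the delicate point to be organizing these three contributions so that the reciprocity step is legitimate (controlling the conductor of the flipped character, and handling the ramified primes $1+i$ resp. $1-\omega$ and the sign/unit factors in the reciprocity law) and so that no logarithmic losses accumulate beyond what is absorbed into $Y^{\varepsilon}$ and the $U=(X/Y)^{1/2}$ factors coming from the derivatives of $\Phi,W$.
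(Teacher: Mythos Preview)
Your overall architecture---Poisson summation in the $m$-variable, isolate the $k=0$ term for the main contribution, bound the $k\neq 0$ tail---matches the paper, and you correctly locate the $XY^{\theta/j}$ error in the Gauss--circle count of primary $j$-th powers up to $Y$ arising from the $k=0$ term. However, your proposed treatment of the $k\neq 0$ terms has a genuine gap for $j=3$ and $j=4$.

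After Poisson the dual sum carries the Gauss sum $g_j(k,n)$. For $(k,n)=1$ one has $g_j(k,n)=\overline{\leg{k}{n}}_j\,g_j(n)$, and reciprocity does convert $\leg{k}{n}_j$ into $\leg{n}{k}_j$. When $j=2$, the remaining factor $g_2(n)$ is, for squarefree primary $n$, explicitly $\leg{i}{n}N(n)^{1/2}$ by Lemma~\ref{Gausssum}, so the $n$-sum genuinely becomes a Hecke character sum and a P\'olya--Vinogradov argument is viable. But for $j=3,4$ the normalized Gauss sum $g_j(n)N(n)^{-1/2}$ is \emph{not} a character in $n$: its argument is not multiplicative, and its distribution is precisely the content of the Kummer problem. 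Consequently the $n$-sum $\sum_{n}\leg{n}{k}_j\,g_j(n)\,\Phi(N(n)/Y)$ is not a character sum, and \eqref{PV} does not apply to it. The paper avoids this obstruction by Mellin-inverting the $n$-sum to the Dirichlet series $G_j(s,k)=\sum_n g_j(k,n)N(n)^{-s}$ and invoking a theorem of Patterson (Lemma~\ref{lem1}) giving its meromorphic continuation to $\Re s>1$, the location of its only possible pole at $s=1+1/j$, and polynomial bounds in vertical strips; the $k\neq 0$ estimate then follows by shifting the contour to $\Re s=1+\varepsilon$. Without this ingredient (or an equivalent substitute for the metaplectic input), the cases $j=3,4$ cannot be closed as you outline.

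Two smaller corrections. Even for $j=2$ the paper does not use P\'olya--Vinogradov on the dual side; it factors $G_2(1+s,k)=L(\tfrac12+s,\chi_{k_1})\,G(s,k)$, bounds $G(s,k)$ via Lemma~\ref{Gausssum}, and applies the convexity bound for $L(\tfrac12+s,\chi_{k_1})$ after the same contour shift. And the first error term $XY^{1/j}(X/Y)^{-1/2}$ is not a ``diagonal principal-$k$'' contribution; it is the combined effect of the smoothing parameter $U=(X/Y)^{1/2}$ (the $O(1/U)$ losses in $\widetilde W_i(0)$ and in $\Phi$) together with the residue picked up at $s=1/j$ in the contour shift.
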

Note that Theorem \ref{quadraticmean} gives a valid asymptotic formula for $S_2(X,Y)$ when $Y=o(X)$ and $X=o(Y^{2-\varepsilon})$. This goes beyond what one gets via the P\'olya-Vinogradov inequality. Similar observations apply to $S_3(X,Y)$ and $S_4(X,Y)$ as well. \newline

We conclude this section by giving a sketch of the proof of Theorem~\ref{quadraticmean} and some remarks about our result and the one in \cite{CFS}.  In proving Theorem~\ref{quadraticmean}, we start by applying the Poisson summation formula, Lemma~\ref{Poissonsumformodd}, to the sum over $m$ in $S_j(X,Y)$.  We arrive at a sum over $k \in \intz[i]$ or $k \in \intz[\omega]$ and the summands will involve Gauss sums.  The main term will come from the case in which $k=0$.  For $j=3$ or 4, the complementary sums (for $k\neq 0$) are bounded using a result of S. J. Patterson \cite{P} (see Lemma~\ref{lem1}).  The treatment of the case $j=2$ is more involved.  But in this case, we also have the explicit evaluation of the quadratic Gauss sums, Lemma~\ref{Gausssum}.  Using Mellin inversion, we are led to an expression involving Hecke $L$-functions.  Some judicious movements of the contour, together with the convexity bound for the Hecke $L$-functions and bounds derived using Lemma~\ref{Gausssum}, will enable us to arrive at the estimates needed for the case with $j=2$. It is plausible that our methods can be generalized to study mean values of quadratic Hecke characters of any imaginary quadratic number field of class number one, provided that we can have an explicit evaluation of the corresponding quadratic Gauss sum similar to the ones given in Lemma \ref{Gausssum}. As such result is not available in general, we shall be contented with our result here. \newline

We further note that, as mentioned before, the main result in \cite{CFS} gives an asymptotic formula for the sum in \eqref{CFSsum} with quadratic Dirichlet characters for all large $X$ and $Y$.  Moreover, the reader will be able to find analogues in \cite{CFS} of some of the steps in the proof in this paper.  Among the most salient differences between the results are our additional treatment of cubic and quartic Hecke characters and the use of the bounds for the Hecke $L$-functions.

\section{Preliminaries}
\label{sec 2}

\subsection{Quadratic, cubic, quartic symbols}
\label{sec2.4}
    The symbol $(\frac{\cdot}{n})_4$ is the quartic
residue symbol in the ring $\mz[i]$.  For a prime $\varpi \in \mz[i]$
with $N(\varpi) \neq 2$, the quartic symbol is defined for $a \in
\mz[i]$, $(a, \varpi)=1$ by $\leg{a}{\varpi}_4 \equiv
a^{(N(\varpi)-1)/4} \pmod{\varpi}$, with $\leg{a}{\varpi}_4 \in \{
\pm 1, \pm i \}$. When $\varpi | a$, we define
$\leg{a}{\varpi}_4 =0$.  Then the quartic character can be extended
to any composite $n$ with $(N(n), 2)=1$ multiplicatively. We extend the definition of $\leg{\cdot }{n}_4$ to $n=1$ by setting $\leg{\cdot}{1}_4=1$. We further define $(\frac{\cdot}{n})=\leg {\cdot}{n}^2_4$  to be the quadratic
residue symbol for these $n$.  \newline

The symbol $(\frac{\cdot}{n})_3$ is the cubic
residue symbol in the ring $\mz[\omega]$.  For a prime $\varpi \in \mz[\omega]$
with $N(\varpi) \neq 3$, the cubic symbol is defined for $a \in
\mz[\omega]$, $(a, \varpi)=1$ by $\leg{a}{\varpi}_3 \equiv
a^{(N(\varpi)-1)/3} \pmod{\varpi}$, with $\leg{a}{\varpi}_3 \in \{
1, \omega, \omega^2 \}$. When $\varpi | a$, we define
$\leg{a}{\varpi}_3 =0$.  Then the cubic character can be extended
to any composite $n$ with $(N(n), 3)=1$ multiplicatively. We extend the definition of $\leg{\cdot }{n}_3$ to $n=1$ by setting $\leg{\cdot}{1}_3=1$. \newline

Moreover, note that in $\intz[i]$, every ideal coprime to $2$ has a unique
generator congruent to 1 modulo $(1+i)^3$ (see the paragraph above Lemma 8.2.1 in \cite{BEW}) .  Such a generator is
called primary.  The purpose of the notion ``primary" is to eliminate the ambiguity caused by the fact that different elements of $\intz[i]$ can generate the same ideal (see Section 3, Chapter 9, \cite{I&R}).  Recall that \cite[Theorem 6.9]{Lemmermeyer} the quartic reciprocity law states
that for two primary integers  $m, n \in \mz[i]$,
\begin{equation*}
 \leg{m}{n}_4 = \leg{n}{m}_4(-1)^{((N(n)-1)/4)((N(m)-1)/4)}.
\end{equation*}

    As a consequence, the following quadratic reciprocity law holds for two primary integers  $m, n \in \mz[i]$:
\begin{align*}
 \leg{m}{n} = \leg{n}{m}.
\end{align*}

  Note also that in $\intz[\omega]$, every ideal coprime to $3$ has a unique
generator congruent to 1 modulo $3$ (see \cite[Proposition 8.1.4]{BEW}, but be aware that the notion primary in \cite[Proposition 8.1.4]{BEW} is slightly different from here).  We call such a generator primary. Recall that \cite[Theorem 7.8]{Lemmermeyer} the cubic reciprocity law states
that for two primary integers  $m, n \in \mz[\omega]$,
\begin{equation*}
 \leg{m}{n}_3 = \leg{n}{m}_3.
\end{equation*}

   Observe that an element
$n=a+bi$ of $\mz[i]$ is congruent to $1
\bmod{(1+i)^3}$ if and only if $a \equiv 1 \pmod{4}, b \equiv
0 \pmod{4}$ or $a \equiv 3 \pmod{4}, b \equiv 2 \pmod{4}$ by Lemma
6 on page 121 of \cite{I&R}. Observe also that
$n=a+b\omega \in \mz[\omega]$ is congruent to $1
\bmod{3}$ if and only if $a \equiv 1 \pmod{3}, b \equiv
0 \pmod{3}$ (see the discussions before \cite[Proposition 9.3.5]{I&R}). \newline

From the supplement theorems to the quartic and cubic reciprocity laws (see for example, \cite[Theorem 6.9]{Lemmermeyer} and \cite[Theorem 7.8]{Lemmermeyer}),
we have for $n=a+bi$ primary,
\begin{align}
\label{2.05}
  \leg {i}{n}_4=i^{(1-a)/2} \qquad \mbox{and} \qquad  \hspace{0.1in} \leg {1+i}{n}_4=i^{(a-b-1-b^2)/4},
\end{align}
and for $n=a+b\omega$ primary,
\begin{align}
\label{2.06}
  \leg {\omega}{n}_3=\omega^{(1-a-b)/3} \qquad \mbox{and} \qquad  \hspace{0.1in} \leg {1-\omega}{n}_3=i^{(a-1)/3}.
\end{align}

It is well-known that the number fields $\mq(i)$ and $\mq(\omega)$ both have class number one.  As discussed above, every ideal in $\intz[i]$ coprime to $2$ has a unique generator congruent to 1 modulo $(1+i)^3$ and every ideal in $\intz[\omega]$ coprime to $3$ has a unique
generator congruent to 1 modulo $3$. One deduces from the quadratic, quartic reciprocity and \eqref{2.05} that $\leg {m}{n}_j=1, j=2,4$ when $n \equiv 1 \pmod {16m}$. It follows from this that $\leg {m}{\cdot}_j, j=2,4$ is well-defined modulo $16m$. As $\leg {m}{\cdot}_j, j=2,4$ are trivial on units, they can be regarded as Hecke characters $\chi_j \pmod {16m}$ of trivial infinite type when $(m, 1+i)=1$.  For any ideal $(c)$ co-prime to $(1+i)$, with $c$ being the unique generator of $(c)$ satisfying $c \equiv 1 \pmod {(1+i)^3}$, $\chi_j((c))$ is defined as $\chi_j((c))=\leg {a}{c}_j$. Similarly,  one can regard $\leg {m}{\cdot}_3$ as Hecke characters $\chi_3 \pmod {9m}$ of trivial infinite type when $(m, 1-\omega)=1$.  For any ideal $(c)$ co-prime to $(1-\omega)$, with $c$ being the unique generator of $(c)$ satisfying $c \equiv 1 \pmod {3}$, $\chi_3((c))$ is defined as $\chi_3((c))=\leg {a}{c}_3$.   We refer the reader to Chapter 9 of \cite{I&R} for detailed discussions of these facts.  We shall use $\leg {m}{\cdot}_j, j=2,3,4$ to denote the corresponding Hecke characters as well, when there is no risk of confusion. In this manner, we can interpret $S_j(X,Y)$ as Hecke character sums.

\subsection{Gauss sums}
\label{section:Gauss}
For any $n \in \mz[i]$, $n \equiv 1 \pmod {(1+i)^3}$, the quadratic and quartic
 Gauss sums $g_2(n), g_4(n)$ are defined by
\[    g_2(n) =\sum_{x \bmod{n}} \leg{x}{n} \widetilde{e}_i\leg{x}{n} \qquad \mbox{and} \qquad
   g_4(n)  =\sum_{x \bmod{n}} \leg{x}{n}_4 \widetilde{e}_i\leg{x}{n}, \]
  where $ \widetilde{e}_i(z) =\exp \left( 2\pi i  \left( \frac {z}{2i} - \frac {\bar{z}}{2i} \right) \right)$.  This is analogous to the classical Gauss sums of Dirichlet characters, as all additive characters on $\intz[i]/(n)$ are of the form $\tilde{e}_i (kx/n)$ for some $k$.  Moreover, note that $g_2(1)=g_4(1)=1$ by definition. \newline

Furthermore, for any $n, r \in \mz[i]$, $n \equiv 1 \pmod {(1+i)^3}$, we set
\begin{align*}
 g_2(r,n) = \sum_{x \bmod{n}} \leg{x}{n} \widetilde{e}_i\leg{rx}{n} \qquad \mbox{and} \qquad g_4(r,n) = \sum_{x \bmod{n}} \leg{x}{n}_4 \widetilde{e}_i\leg{rx}{n}.
\end{align*}

   Similarly, for any $n \in \mz[\omega]$, $n \equiv 1 \pmod {3}$, the cubic Gauss sum $g_3(n)$ is defined by
\begin{align*}
   g_3(n) =\sum_{x \bmod{n}} \leg{x}{n}_3 \widetilde{e}_{\omega}\leg{x}{n},
\end{align*}
  where $ \widetilde{e}_{\omega}(z) =\exp \left( 2\pi i  \left( \frac {z}{\sqrt{-3}} - \frac {\bar{z}}{\sqrt{-3}} \right) \right)$.  Note that $g_3(1)=1$ by definition. \newline

  More generally, for any $n, r \in \mz[\omega]$, $n \equiv 1 \pmod {3}$, we set
\begin{align*}
 g_3(r,n) = \sum_{x \bmod{n}} \leg{x}{n}_3 \widetilde{e}_{\omega}\leg{rx}{n}.
\end{align*}

  The following property of $g_j(r,n)$ for $j=2,3,4$ can be found in \cites{Diac, B&Y} and \cite[Lemmas 2.3 and 2.4]{G&Zhao4}:
\begin{align}
\label{eq:gmult}
 g_j(rs,n) & = \overline{\leg{s}{n}}_j g_j(r,n), \quad (s,n)=1, \qquad \mbox{$n$ primary}.
\end{align}

The next lemma is an analogue of \cite[Lemma 2.3]{sound1} and allows us to evaluate $g_2(r,n)$ for $n \equiv 1 \pmod {(1+i)^3}$ explicitly.
\begin{lemma}{\cite[Lemma 2.4]{G&Zhao4}}
\label{Gausssum}
\begin{enumerate}[(i)]
\item  For $m,n$ primary and $(m , n)=1$, we have
\begin{align*}
   g_2(k,mn) & = g_2(k,m)g_2(k,n).
\end{align*}
\item Let $\varpi$ be a primary prime in $\mz[i]$. Suppose $\varpi^{h}$ is the largest power of $\varpi$ dividing $k$. (If $k = 0$ then set $h = \infty$.) Then for $l \geq 1$,
\begin{align*}
g_2(k, \varpi^l)& =\begin{cases}
    0 \qquad & \text{if} \qquad l \leq h \qquad \text{is odd},\\
    \varphi(\varpi^l)=\#(\mz[i]/(\varpi^l))^* \qquad & \text{if} \qquad l \leq h \qquad \text{is even},\\
    -N(\varpi)^{l-1} & \text{if} \qquad l= h+1 \qquad \text{is even},\\
    \leg {ik\varpi^{-h}}{\varpi}N(\varpi)^{l-1/2} \qquad & \text{if} \qquad l= h+1 \qquad \text{is odd},\\
    0, \qquad & \text{if} \qquad l \geq h+2.
\end{cases}
\end{align*}
Here $(\mz[i]/(\varpi^l))^*$ is the group of units in $(\mz[i]/(\varpi^l))$.
\end{enumerate}
\end{lemma}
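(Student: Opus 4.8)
The plan is to obtain (i) from the Chinese Remainder Theorem and (ii) by reducing the prime-power modulus to a prime, where one invokes the classical evaluation of quadratic Gauss sums over finite fields.

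\emph{Part (i).} Since $m,n$ are primary with $(m,n)=1$, choose $\bar m,\bar n\in\mz[i]$ with $m\bar m\equiv 1\pmod n$ and $n\bar n\equiv 1\pmod m$; then, as $(a,b)$ runs over $\mz[i]/(m)\times\mz[i]/(n)$, the element $x=n\bar n a+m\bar m b$ runs over $\mz[i]/(mn)$ with $x\equiv a\pmod m$ and $x\equiv b\pmod n$. Using the multiplicativity of $\leg{x}{\cdot}$ in the modulus together with $\widetilde{e}_i\!\left(kx/(mn)\right)=\widetilde{e}_i\!\left(k\bar n a/m\right)\widetilde{e}_i\!\left(k\bar m b/n\right)$, the double sum defining $g_2(k,mn)$ factors as $g_2(k\bar n,m)\,g_2(k\bar m,n)$. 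Now \eqref{eq:gmult} (the conjugation there being trivial, since $\leg{\cdot}{\cdot}$ is real-valued) lets me extract $\leg{\bar n}{m}=\leg{n}{m}$ and $\leg{\bar m}{n}=\leg{m}{n}$, whose product is $\leg{m}{n}^2=1$ by the quadratic reciprocity law of Section~\ref{sec2.4}. Hence $g_2(k,mn)=g_2(k,m)g_2(k,n)$.

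\emph{Part (ii).} Write $k=\varpi^h k_0$ with $(k_0,\varpi)=1$ (the case $k=0$, $h=\infty$, being subsumed under $l\le h$). The only tool needed is the decomposition $x=a+\varpi^{l-1}b$ for $l\ge 2$, with $a$ ranging over $\mz[i]/(\varpi^{l-1})$ and $b$ over $\mz[i]/(\varpi)$, which bijects onto $\mz[i]/(\varpi^l)$. Since $\leg{a+\varpi^{l-1}b}{\varpi}=\leg{a}{\varpi}$ and $\widetilde{e}_i(kx/\varpi^l)=\widetilde{e}_i(ka/\varpi^l)\,\widetilde{e}_i(kb/\varpi)$, the $b$-sum detaches as $\sum_{b\bmod\varpi}\widetilde{e}_i\!\left(\varpi^{h-1}k_0 b\right)$, which equals $N(\varpi)$ if $h\ge 1$ and $0$ (a nontrivial additive character sum) if $h=0$. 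Iterating this, one reduces to a sum over $\mz[i]/(\varpi)$ and reads off the five cases: if $l\le h$ the additive factors are all trivial, leaving $N(\varpi)^{l-1}\sum_{a\bmod\varpi}\leg{a}{\varpi}^l$, equal to $0$ for $l$ odd and to $\varphi(\varpi^l)$ for $l$ even; if $l\ge h+2$, after $h$ steps the surviving $b$-sum has $\varpi\nmid k_0$ and vanishes; and if $l=h+1$, after $h$ steps the sum becomes $N(\varpi)^h\sum_{a\bmod\varpi}\leg{a}{\varpi}^l\widetilde{e}_i(k_0 a/\varpi)$, which for $l$ even equals $-N(\varpi)^{l-1}$ and for $l$ odd equals $N(\varpi)^h g_2(k_0,\varpi)=N(\varpi)^h\leg{k_0}{\varpi}g_2(\varpi)$ by \eqref{eq:gmult}.

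What remains --- and this is the only genuine difficulty --- is to evaluate $g_2(\varpi)=\sum_{x\bmod\varpi}\leg{x}{\varpi}\widetilde{e}_i(x/\varpi)$ over the residue field $\mz[i]/(\varpi)\cong\fief_{N(\varpi)}$. Orthogonality gives $g_2(\varpi)^2=\leg{-1}{\varpi}N(\varpi)=N(\varpi)$ (since $-1=i^2$ is a square in $\mz[i]$), so $g_2(\varpi)=\pm N(\varpi)^{1/2}$, and one must show the sign is $\leg{i}{\varpi}$. This is the classical problem of the sign of a quadratic Gauss sum: Gauss's argument handles $\varpi$ split (residue field $\fief_p$, $p\equiv 1\bmod 4$), and the Davenport--Hasse relation handles $\varpi=p$ inert (residue field $\fief_{p^2}$, $p\equiv 3\bmod 4$), the result being matched against the supplement to the quartic reciprocity law \eqref{2.05}, which evaluates $\leg{i}{\varpi}$ through the residue of $\varpi$ modulo $(1+i)^3$. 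Substituting $g_2(\varpi)=\leg{i}{\varpi}N(\varpi)^{1/2}$ into the case $l=h+1$ odd and using $\leg{k_0}{\varpi}\leg{i}{\varpi}=\leg{ik_0}{\varpi}$ with $k_0=k\varpi^{-h}$ yields $\leg{ik\varpi^{-h}}{\varpi}N(\varpi)^{l-1/2}$, completing (ii). Everything other than this Gauss-sum sign is routine bookkeeping.
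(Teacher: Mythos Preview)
The paper does not prove this lemma: it is quoted from \cite[Lemma~2.4]{G&Zhao4} and used as a black box, so there is no argument here to compare against.

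Your proof is correct and is the standard one. Part~(i) is the usual CRT factorisation, with quadratic reciprocity in $\mz[i]$ killing the cross-factor $\leg{n}{m}\leg{m}{n}=\leg{m}{n}^2=1$. Part~(ii) is the standard filtration argument, peeling off one power of $\varpi$ at a time via $x=a+\varpi^{l-1}b$; the only substantive input is the sign of $g_2(\varpi)$, and you correctly isolate this as the crux and indicate how the classical sign theorems (Gauss for split $\varpi$, Davenport--Hasse lifting for inert $\varpi$) combine with the supplementary law \eqref{2.05} to yield $g_2(\varpi)=\leg{i}{\varpi}N(\varpi)^{1/2}$. One small remark: in your iteration the intermediate sums are $\sum_{a\bmod\varpi^{m}}\leg{a}{\varpi}^{l}\widetilde{e}_i(\cdots)$ with the exponent on the symbol fixed at $l$ (its parity) throughout, so they are not literally $g_2$ at lower moduli but only sums of the same shape; your write-up handles this correctly, but it is worth saying explicitly.
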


\subsection{The Poisson Summation}
  The proof of Theorems \ref{quadraticmean} requires the following Poisson summation formula ( see \cite{G&Zhao4} for details):
\begin{lemma}{\cite[Corollary 2.12]{G&Zhao4}}
\label{Poissonsumformodd} Let $n \in \mz[i],   n \equiv 1 \pmod {(1+i)^3}$ and $\leg {\cdot}{n}_2$ ($\leg {\cdot}{n}_4$ ) be the quadratic (quartic) residue symbol $\pmod {n}$. For any Schwartz class function $W$,  we have
\begin{align*}
   \sum_{\substack {m \in \mz[i] \\ (m,1+i)=1}}\leg {m}{n}_j W\left(\frac {N(m)}{X}\right)=\frac {X}{2N(n)}\leg {1+i}{n}_j\sum_{k \in
   \mz[i]}(-1)^{N(k)} g_j(k,n)\widetilde{W}_i\left(\sqrt{\frac {N(k)X}{2N(n)}}\right), \quad j=2,4,
\end{align*}
   where
\begin{align*}
   \widetilde{W}_i(t) &=\int\limits^{\infty}_{-\infty}\int\limits^{\infty}_{-\infty}W(N(x+yi))\widetilde{e}_i\left(- t(x+yi)\right)\dif x \dif y, \quad t \geq 0.
\end{align*}
\end{lemma}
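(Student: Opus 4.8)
The plan is to prove the identity as an instance of Poisson summation over the lattice $\mz[i]$, applied to the weight $W(N(\cdot)/X)$ after decomposing the arithmetic factor into residue classes. The first observation is that, for fixed primary $n$, the function $m \mapsto \mathbbm{1}_{(m,1+i)=1}\leg{m}{n}_j$ is periodic modulo the ideal $((1+i)n)$: the symbol $\leg{\cdot}{n}_j$ has period $n$, the coprimality condition has period $(1+i)$, and these two moduli are coprime since $n \equiv 1 \pmod{(1+i)^3}$. I would therefore write
\[
\sum_{\substack{m \in \mz[i]\\(m,1+i)=1}}\leg{m}{n}_j W\!\left(\frac{N(m)}{X}\right) = \sum_{\substack{a \bmod (1+i)n\\(a,1+i)=1}}\leg{a}{n}_j \sum_{\substack{m\in \mz[i]\\ m\equiv a \bmod (1+i)n}} W\!\left(\frac{N(m)}{X}\right),
\]
reducing the problem to understanding the inner sum over a single residue class.

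Next I would apply the two-dimensional Poisson summation formula to the inner sum. Writing $m = a + (1+i)n\ell$ with $\ell$ ranging over $\mz[i]\cong \mathbb{Z}^2$ and setting $F(z) = W(N(z)/X)$, Poisson summation over the lattice $(1+i)n\,\mz[i]$, whose covolume is $N((1+i)n) = 2N(n)$, produces a sum over the dual lattice indexed by $k \in \mz[i]$. Because $W(N(\cdot))$ is radial, its Fourier transform is radial as well, and the substitution $z = \sqrt{X}\,u$ yields the factor $X/(2N(n))$ together with the transform $\widetilde{W}_i$ evaluated at $\sqrt{N(k)X/(2N(n))}$, matching the stated argument; the dual-lattice character evaluates to $\widetilde{e}_i(ka/((1+i)n))$ once the standard pairing on $\mathbb{R}^2$ is matched to the definition $\widetilde{e}_i(z)=\exp(2\pi i\,(\tfrac{z}{2i}-\tfrac{\bar z}{2i}))$. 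Interchanging the finite $a$-sum with the $k$-sum, the coefficient of $\widetilde{W}_i(\sqrt{N(k)X/(2N(n))})$ becomes the twisted character sum
\[
G(k) = \sum_{\substack{a \bmod (1+i)n\\(a,1+i)=1}}\leg{a}{n}_j\,\widetilde{e}_i\!\left(\frac{ka}{(1+i)n}\right).
\]

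It then remains to show $G(k) = (-1)^{N(k)}\leg{1+i}{n}_j\, g_j(k,n)$. For this I would factor $G(k)$ through the Chinese Remainder Theorem isomorphism $\mz[i]/((1+i)n)\cong \mz[i]/(1+i)\times \mz[i]/(n)$, legitimate since $(1+i,n)=1$. Both the additive character and the symbol split along this decomposition. The $(1+i)$-component contributes a single term, since the only residue coprime to $1+i$ is the nonzero one in $\mz[i]/(1+i)\cong \mathbb{F}_2$, and a direct computation of $\widetilde{e}_i$ at this point gives the sign $(-1)^{N(k)}$. The $n$-component is a character sum over $\mz[i]/(n)$ that, after absorbing the Chinese-Remainder unit via the multiplicativity relation \eqref{eq:gmult}, equals $\leg{1+i}{n}_j\, g_j(k,n)$; the factor $\leg{1+i}{n}_j$ is precisely the twist $\overline{\leg{s}{n}}_j$ arising from \eqref{eq:gmult} with $s$ the inverse of $1+i$ modulo $n$. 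Assembling the three factors gives the claimed formula.

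The main obstacle is not conceptual but bookkeeping: one must carefully match the normalization of the classical Poisson summation formula (covolume, dual lattice, and the standard additive pairing on $\mathbb{R}^2$) to the arithmetic additive character $\widetilde{e}_i$, so that the frequency $\sqrt{N(k)X/(2N(n))}$ and the prefactor $X/(2N(n))$ emerge with the correct constants, and one must evaluate the $(1+i)$-adic component of the twisted sum exactly, so as to recover $(-1)^{N(k)}$ rather than merely its modulus. The Schwartz-class hypothesis on $W$ guarantees absolute convergence and justifies both the interchange of summations and the application of Poisson summation, so these analytic points are routine.
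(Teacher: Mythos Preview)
Your argument is sound. The paper does not actually supply a proof of this lemma; it is quoted from \cite{G&Zhao4}. However, the paper \emph{does} prove the cubic analogue (Lemma~\ref{Poissonsumcubic} and Corollary~\ref{Poissonsum3free}), and that proof proceeds differently from yours. There the authors first apply Poisson summation modulo $n$ alone, obtaining an unrestricted sum over all $m\in\mz[\omega]$ on the left (no coprimality condition), and then in a separate step (Corollary~\ref{Poissonsum3free}) impose $(m,1-\omega)=1$ by inclusion--exclusion, subtracting the subsum over $m$ divisible by $1-\omega$ and recombining on the dual side. The factor $\leg{1-\omega}{n}_3$ and the detector $\omega^{N(k)}+\bar\omega^{N(k)}$ emerge from that recombination rather than from a CRT splitting of a Gauss sum.

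Your route---working modulo $(1+i)n$ from the outset and splitting the twisted exponential sum via the Chinese Remainder Theorem---is equally valid and arguably more direct: it produces $\leg{1+i}{n}_j$ and $(-1)^{N(k)}$ in one stroke, at the cost of a slightly more delicate bookkeeping step (tracking the CRT idempotents through $\widetilde e_i$ and invoking \eqref{eq:gmult} with $s\equiv(1+i)^{-1}\pmod n$). The two-step approach in the paper trades that bookkeeping for an extra page of manipulation, but has the mild advantage that the unrestricted Poisson identity (your Lemma~\ref{Poissonsumcubic} analogue) is reusable on its own. Either way, the Schwartz hypothesis handles all convergence and interchange issues, as you note.
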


    Similarly, we have the following result:
\begin{lemma}
\label{Poissonsumcubic} Let $n \in \mz[\omega], n \equiv 1 \pmod {3}$ and $\leg {\cdot}{n}_3$ be the cubic symbol $\pmod {n}$. For any Schwartz class function $W$,  we have for all $a>0$,
\begin{align*}
   \sum_{m \in \mz[\omega]}\leg {m}{n}_3W\left(\frac {N(m)}{X}\right)=\frac {X}{aN(n)}\sum_{k \in
   \mz[\omega]}g_{3}(k,n)\widetilde{W}_{\omega}\left(\sqrt{\frac {N(k)X}{aN(n)}}\right),
\end{align*}
   where
\begin{align*}
   \widetilde{W}_{\omega}(t) &=\int\limits^{\infty}_{-\infty}\int\limits^{\infty}_{-\infty}W(N(x+y\omega))\widetilde{e}_{\omega}\left(- t(x+y\omega)\right)\dif x \dif y, \quad t \geq 0.
\end{align*}
\end{lemma}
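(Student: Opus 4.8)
The statement to prove is Lemma~\ref{Poissonsumcubic}, the Poisson summation formula for the cubic residue symbol over $\mathbb{Z}[\omega]$. The plan is to mirror the proof of the analogous quadratic/quartic statement, Lemma~\ref{Poissonsumformodd}, which is cited from \cite{G&Zhao4}; indeed one expects the cubic case to follow from essentially the same argument, and likely it too is contained (perhaps implicitly or in a companion result) in \cite{G&Zhao4}. I would begin by writing the left-hand sum as a sum over residue classes modulo $n$: since $\leg{m}{n}_3$ depends only on $m \bmod n$ and $W(N(m)/X)$ is treated via its behaviour on the lattice $\mathbb{Z}[\omega]$, I split $m = x + n\ell$ with $x$ running over a fixed set of representatives of $\mathbb{Z}[\omega]/(n)$ and $\ell$ running over $\mathbb{Z}[\omega]$, obtaining
\[
  \sum_{m \in \mathbb{Z}[\omega]}\leg{m}{n}_3 W\!\left(\frac{N(m)}{X}\right)
  = \sum_{x \bmod n}\leg{x}{n}_3 \sum_{\ell \in \mathbb{Z}[\omega]} W\!\left(\frac{N(x+n\ell)}{X}\right).
\]

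The next step is to apply the classical two-dimensional Poisson summation formula to the inner sum over the rank-two lattice $\Lambda = n\mathbb{Z}[\omega]$ (shifted by $x$), identifying $\mathbb{Z}[\omega]$ with a lattice in $\mathbb{R}^2 \cong \mathbb{C}$ via $a + b\omega \mapsto a + b\omega \in \mathbb{C}$. The covolume of $\mathbb{Z}[\omega]$ is $\sqrt{3}/2$ (equivalently related to $N(n)$ for the scaled lattice $n\mathbb{Z}[\omega]$, whose covolume is $N(n)\sqrt{3}/2$), and the dual lattice is $\frac{1}{n}\cdot(\text{dual of }\mathbb{Z}[\omega])$. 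The additive character $\widetilde{e}_\omega(z) = \exp(2\pi i(z/\sqrt{-3} - \bar z/\sqrt{-3}))$ is precisely the one that realizes the pairing with the dual lattice: $\widetilde{e}_\omega(z)$ is trivial on $\mathbb{Z}[\omega]$ (one checks $\widetilde e_\omega(1)=\widetilde e_\omega(\omega)=1$ using $\omega/\sqrt{-3} - \bar\omega/\sqrt{-3} \in \mathbb{Z}$), so $\{z : \widetilde e_\omega(kz)=1 \ \forall z \in \mathbb{Z}[\omega]\} = \mathbb{Z}[\omega]$ and hence the dual of $n\mathbb{Z}[\omega]$ under this pairing is $\frac{1}{n}\mathbb{Z}[\omega]$. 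Carrying out Poisson summation on the inner sum gives
\[
  \sum_{\ell \in \mathbb{Z}[\omega]} W\!\left(\frac{N(x+n\ell)}{X}\right)
  = \frac{1}{c\, N(n)} \sum_{k \in \mathbb{Z}[\omega]} \widetilde{e}_\omega\!\left(\frac{kx}{n}\right) \widehat{W_X}\!\left(\frac{k}{n}\right),
\]
where $c$ is the normalization constant coming from the covolume and $\widehat{W_X}$ is the Fourier transform of $z \mapsto W(N(z)/X)$; a change of variables $z \mapsto \sqrt{X}\,z$ converts this to $X\,\widetilde{W}_\omega(\sqrt{N(k)X}/(\text{something}))$, absorbing constants into the free parameter $a$ (which is why the statement is phrased "for all $a>0$" — the shape is parameter-robust, so one just needs to track that the same $a$ appears in the prefactor $\frac{X}{aN(n)}$ and inside $\widetilde{W}_\omega(\sqrt{N(k)X/(aN(n))})$, and that $\widetilde W_\omega$ depends only on $N(k)$ through rotational symmetry of the lattice and character).

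Finally, I would interchange the order of summation, pulling the sum over $x \bmod n$ inside, to recognize
\[
  \sum_{x \bmod n} \leg{x}{n}_3 \, \widetilde{e}_\omega\!\left(\frac{kx}{n}\right) = g_3(k,n),
\]
which is exactly the definition of the cubic Gauss sum given in Section~\ref{section:Gauss}, yielding the claimed identity. The main obstacle — really the only non-routine point — is getting the normalization constants exactly right: verifying that $\widetilde{e}_\omega$ is the correct self-dual-type additive character for the lattice $\mathbb{Z}[\omega]$ (so that the dual lattice computation produces $\frac{1}{n}\mathbb{Z}[\omega]$ rather than some rescaled version), and checking that $\widetilde{W}_\omega$ as defined depends on $k$ only via $N(k)$, which requires that the map $k \mapsto \widetilde e_\omega(kz)$ interacts with the lattice symmetry correctly. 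Because the lemma is stated with the free parameter $a > 0$, much of the constant-chasing is deliberately deferred: one only needs consistency between the two appearances of $a$, so the verification reduces to confirming the structural shape of the transform and the Gauss-sum identification, both of which parallel the quadratic case in Lemma~\ref{Poissonsumformodd} verbatim with $(1+i)^3$, $\mathbb{Z}[i]$, $\widetilde e_i$ replaced by $3$, $\mathbb{Z}[\omega]$, $\widetilde e_\omega$.
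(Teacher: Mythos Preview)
Your approach matches the paper's: split the sum into residue classes modulo $n$, apply the Poisson summation formula for $\mathbb{Z}[\omega]$ in the form $\sum_{j\in\mathbb{Z}[\omega]}f(j)=\sum_{k\in\mathbb{Z}[\omega]}\int\!\!\int f(x+y\omega)\,\widetilde{e}_\omega(-k(x+y\omega))\,dx\,dy$ to the inner lattice sum, change variables in the resulting integral to extract the factor $X/(aN(n))$ and the argument $\sqrt{N(k)X/(aN(n))}$, and then recognise the residue sum as $g_3(k,n)$.

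One point to correct: your reading of the parameter $a$ is wrong. The right-hand side is \emph{not} independent of $a$, so $a$ cannot be a free sink into which covolume constants are absorbed. In the paper's proof the computation is actually carried out with $W(aN(m)/X)$ on the left-hand side, so $a$ is a genuine scaling parameter present on both sides (its omission from the left in the printed statement is a typo); the Jacobian of the change of variables is $aN(n)/X$, which is where the prefactor comes from. With the Poisson formula quoted above the covolume factors already cancel (the measure $dx\,dy$ in the coordinates $x+y\omega$ is precisely the one making $\mathbb{Z}[\omega]$ self-dual under $\widetilde{e}_\omega$), so there is nothing left over to absorb.
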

\begin{proof}
   We first recall the following Poisson summation formula for
   $\mz[\omega]$ (see the proof of \cite[Lemma 10]{H})
\begin{align*}
   \sum_{j \in \mz[\omega]}f(j)=\sum_{k \in
   \mz[\omega]}\int\limits^{\infty}_{-\infty}\int\limits^{\infty}_{-\infty}f(x+y\omega)\widetilde{e}_{\omega}\left( -k(x+y\omega) \right)\dif x \dif y.
\end{align*}
   We then have
\begin{align*}
    \sum_{m \in \mz[\omega]}\leg {m}{n}_3W\left(\frac {aN(m)}{X}\right) =& \sum_{r \shortmod n}\leg {r}{n}_3\sum_{j \in \mz[\omega]}W\left(\frac {aN(r+jn)}{X}\right)  \\
   =& \sum_{r \shortmod n}\leg {r}{n}_3 \sum_{k \in
   \mz[\omega]} \ \int\limits^{\infty}_{-\infty} \int\limits^{\infty}_{-\infty}W\left(\frac {aN(r+(x+y\omega)n)}{X}\right)\widetilde{e}_{\omega}\left(-k(x+y\omega) \right) \dif
   x \dif y.
\end{align*}
   We change variables in the integral, writing
\begin{equation*}
   \sqrt{N\Big(\frac {n}{k}\Big )}\frac {k}{n}\frac {(r+(x+y\omega)n)}{\sqrt{X/a}}=u+v\omega,
\end{equation*}
   with $u,v \in \mr$. (If $k=0$, we omit the factors involving $k/n$.)  As the Jacobian of
this transformation is $aN(n)/X$, we find that
\begin{align*}
    \int\limits^{\infty}_{-\infty}\int\limits^{\infty}_{-\infty} W & \left(\frac {aN(r+(x+y\omega)n)}{X}\right)\widetilde{e}_{\omega}\left(-k(x+y\omega) \right) \dif x \dif y
   \\
   =& \frac {X}{aN(n)}\widetilde{e}_{\omega}\left(\frac {kr}{n}\right)\int\limits^{\infty}_{-\infty}\int\limits^{\infty}_{-\infty}
   W( N(u+v\omega))\widetilde{e}_{\omega}\left (-(u+v\omega)\sqrt{N(k/n)X/a} \right) \dif u \dif v,
\end{align*}
   whence
\begin{align*}
    \sum_{m \in \mz[\omega]}W\left(\frac {aN(m)}{X}\right)\leg {m}{n}_3 &= \frac {X}{aN(n)}\sum_{k \in
   \mz[\omega]}\widetilde{W}_{\omega}\left(\sqrt{\frac {N(k)X}{aN(n)}}\right)\sum_{r \shortmod n}\leg {r}{n}_3\widetilde{e}_{\omega}\left(\frac {kr}{n}\right).
\end{align*}
   As the inner sum of the last expression above is $g_3(k,n)$ by definition, this completes the proof of the lemma.
\end{proof}

   From Lemma \ref{Poissonsumcubic}, we readily deduce the following
\begin{corollary}
\label{Poissonsum3free} Let $n \in \mz[\omega], n \equiv 1 \pmod {3}$ and $\leg {\cdot}{n}_3$ be the cubic symbol $\pmod {n}$. For any Schwartz class function $W$, we have
\begin{align*}
   \sum_{\substack {m \in \mz[\omega] \\ (m,1-\omega)=1}}\leg {m}{n}_3W\left(\frac {N(m)}{X}\right)=\frac {X}{3N(n)}\leg {1-\omega}{n}_3\sum_{k \in
   \mz[\omega]}\left( \omega^{N(k)}+\overline{\omega}^{N(k)} \right) g_{3}(k,n)\widetilde{W}_{\omega}\left(\sqrt{\frac {N(k)X}{3N(n)}}\right).
\end{align*}
\end{corollary}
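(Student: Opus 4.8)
The plan is to deduce this restricted summation formula from the unrestricted Poisson formula of Lemma~\ref{Poissonsumcubic} by detecting the condition $(m,1-\omega)=1$ through an inclusion--exclusion over the single prime $1-\omega$ (the unique prime of $\mz[\omega]$ above $3$, with $N(1-\omega)=3$ and $\mz[\omega]/(1-\omega)\cong\fief_3$). First I would write
\begin{align*}
 \sum_{\substack{m\in\mz[\omega]\\ (m,1-\omega)=1}}\leg{m}{n}_3 W\!\left(\frac{N(m)}{X}\right)
 =\sum_{m\in\mz[\omega]}\leg{m}{n}_3 W\!\left(\frac{N(m)}{X}\right)-\sum_{\substack{m\in\mz[\omega]\\ (1-\omega)\mid m}}\leg{m}{n}_3 W\!\left(\frac{N(m)}{X}\right),
\end{align*}
and in the last sum substitute $m=(1-\omega)m'$ with $m'$ ranging over $\mz[\omega]$, using the multiplicativity of the cubic symbol, $\leg{(1-\omega)m'}{n}_3=\leg{1-\omega}{n}_3\leg{m'}{n}_3$, together with $N((1-\omega)m')=3N(m')$; this turns that sum into $\leg{1-\omega}{n}_3\sum_{m'\in\mz[\omega]}\leg{m'}{n}_3 W(3N(m')/X)$.

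Next I would apply Lemma~\ref{Poissonsumcubic} to each of the two resulting sums over $\mz[\omega]$, choosing the free scaling parameter so that the two transforms $\widetilde{W}_{\omega}$ can be matched after a change of summation variable. Taking the parameter equal to $3$ gives
\begin{align*}
 \sum_{m'\in\mz[\omega]}\leg{m'}{n}_3 W\!\left(\frac{3N(m')}{X}\right)=\frac{X}{3N(n)}\sum_{k\in\mz[\omega]}g_3(k,n)\,\widetilde{W}_{\omega}\!\left(\sqrt{\frac{N(k)X}{3N(n)}}\right),
\end{align*}
while applying the same identity to the unrestricted sum, after writing $N(m)/X=3N(m)/(3X)$, gives $\sum_{m}\leg{m}{n}_3 W(N(m)/X)=\frac{X}{N(n)}\sum_{k}g_3(k,n)\widetilde{W}_{\omega}(\sqrt{N(k)X/N(n)})$. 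In this last expression I would restrict attention to the sublattice $(1-\omega)\mz[\omega]$, put $k=(1-\omega)k'$ there, and invoke \eqref{eq:gmult} in the form $g_3((1-\omega)k',n)=\overline{\leg{1-\omega}{n}}_3\,g_3(k',n)$, which is legitimate since $(1-\omega,n)=1$ because $n\equiv1\pmod{3}$; after this substitution one has $\widetilde{W}_{\omega}(\sqrt{N((1-\omega)k')X/(3N(n))})=\widetilde{W}_{\omega}(\sqrt{N(k')X/N(n)})$.

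Finally I would assemble the three ingredients using the elementary identity $1+\omega^{N(k)}+\overline{\omega}^{N(k)}=3\cdot\mathbbm{1}_{(1-\omega)\mid k}$, valid because $1+\omega+\omega^2=0$ and $3\mid N(k)\iff(1-\omega)\mid k$, together with $\leg{1-\omega}{n}_3\,\overline{\leg{1-\omega}{n}}_3=\big|\leg{1-\omega}{n}_3\big|^2=1$. Concretely, in the claimed right-hand side of the corollary I would split $\omega^{N(k)}+\overline{\omega}^{N(k)}=3\cdot\mathbbm{1}_{(1-\omega)\mid k}-1$: the $(-1)$ part reproduces exactly $-\leg{1-\omega}{n}_3\sum_{m'}\leg{m'}{n}_3 W(3N(m')/X)$ via the first Poisson evaluation above, and the indicator part, after the change of variable $k=(1-\omega)k'$ and the cancellation $\leg{1-\omega}{n}_3\overline{\leg{1-\omega}{n}}_3=1$, reproduces exactly $\sum_{m}\leg{m}{n}_3 W(N(m)/X)$ via the second Poisson evaluation; this matches the decomposition recorded in the first paragraph and completes the proof. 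Every step here is routine bookkeeping, so I do not anticipate a genuine obstacle; the only point that needs a little care is choosing the scaling parameter consistently in the two applications of Lemma~\ref{Poissonsumcubic} so that the two copies of $\widetilde{W}_{\omega}$ line up after the substitution $k=(1-\omega)k'$.
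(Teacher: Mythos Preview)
Your proposal is correct and follows essentially the same route as the paper: inclusion--exclusion over the single prime $1-\omega$, two applications of Lemma~\ref{Poissonsumcubic} (to the full sum and to the sum with $W(3N(m')/X)$), the twist relation \eqref{eq:gmult} for $g_3((1-\omega)k',n)$, and the root-of-unity detector $1+\omega^{N(k)}+\overline{\omega}^{N(k)}=3\cdot\mathbbm{1}_{(1-\omega)\mid k}$. The only cosmetic difference is that you verify the identity by expanding the right-hand side via $\omega^{N(k)}+\overline{\omega}^{N(k)}=3\cdot\mathbbm{1}_{(1-\omega)\mid k}-1$, whereas the paper proceeds forward from the left-hand side; the algebra is the same.
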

\begin{proof}
Note first that $1-\omega$ is a prime.  Thus it follows from Lemma \ref{Poissonsumcubic} that
\begin{equation} \label{2.21'}
\begin{split}
  \sum_{\substack {m \in \mz[\omega] \\ (m,1-\omega)=1}} \leg {m}{n}_3 & W\left( \frac {N(m)}{X} \right)=\sum_{m}\leg {m}{n}_3W \left( \frac {N(m)}{X} \right)-\leg {1-\omega}{n}_3\sum_{m} \leg {m}{n}_3W \left( \frac {3N(m)}{X} \right) \\
  &=\frac {X}{N(n)}\sum_{k \in
   \mz[\omega]}g_{3}(k,n)\widetilde{W}_{\omega}\left(\sqrt{\frac {N(k)X}{N(n)}}\right) -\leg {1-\omega}{n}_3\frac {X}{3N(n)}\sum_{k \in
   \mz[\omega]}g_{3}(k,n)\widetilde{W}_{\omega}\left(\sqrt{\frac {N(k)X}{3N(n)}}\right).
   \end{split}
\end{equation}

    Using the relation \eqref{eq:gmult}
\begin{align*}
    g_{3}((1-\omega)k,n)=\overline{\leg {1-\omega}{n}_3} g_{3}(k,n),
\end{align*}
    we can rewrite the first sum in the last expression of \eqref{2.21'} as
\begin{align*}
   \sum_{k \in
   \mz[\omega]}g_{3}(k,n)\widetilde{W}_{\omega}\left(\sqrt{\frac {N((1-\omega)k)X}{3N(n)}}\right)  &= \leg {1-\omega}{n}_3\sum_{k \in
   \mz[\omega]} g_{3}((1-\omega)k,n)\widetilde{W}_{\omega}\left(\sqrt{\frac {N((1-\omega)k)X}{3N(n)}}\right) \\
   &=\leg {1-\omega}{n}_3\sum_{\substack {k \in \mz[\omega] \\ 1-\omega| k}}g_{3}(k,n)\widetilde{W}_{\omega}\left(\sqrt{\frac {N(k)X}{3N(n)}}\right) \\
   &=\leg {1-\omega}{n}_3\sum_{k \in \mz[\omega]}\frac {1+\omega^{N(k)}+\bar{\omega}^{N(k)}}{3} g_{3}(k,n)\widetilde{W}_{\omega}\left(\sqrt{\frac {N(k)X}{3N(n)}}\right).
\end{align*}

Substituting this back to last expression in \eqref{2.21'}, we get the desired result.
\end{proof}

\subsection{Estimations of $\widetilde{W}_i(t),\widetilde{W}_{\omega}(t)$ and their derivatives}
     We will require some simple estimates on $\widetilde{W}_i(t),\widetilde{W}_{\omega}(t)$ and their derivatives. First note that for any $t \geq 0$, we have $\widetilde{W}_i(t), \widetilde{W}_{\omega}(t) \in \mr$. In
     fact, it is easy to see that
\begin{align}
\label{Wt}
     \widetilde{W}_i(t)  =\int\limits_{\mr^2}\cos (2\pi t y)W(x^2+y^2) \dif x \dif y.
\end{align}

     On the other hand, we have
\begin{align*}
     \widetilde{W}_{\omega}(t)  =\int\limits_{\mr^2}W(x^2-xy+y^2)e^{-2\pi ity} \dif x \dif y.
\end{align*}

      We change variables in the integral, writing
\begin{align*}
     x-\frac {y}{2}  =x', \quad \frac {\sqrt{3}y}{2}=y'.
\end{align*}

The Jacobian of this transformation is $\sqrt{3}/2$.  So we find that
\begin{align}
\label{Wt3}
     \widetilde{W}_{\omega}(t)  =\frac {2}{\sqrt{3}}\int\limits_{\mr^2}W(x'^2+y'^2)\cos \left( \frac {4\pi ty'}{\sqrt{3}} \right) \dif x' \dif y'.
\end{align}

    Suppose that $W(t)$ is a non-negative smooth function supported on $(0,1)$, satisfying $W(t)=1$ for $t \in (1/U, 1-1/U)$ and $W^{(j)}(t) \ll_j U^j$ for all integers $j \geq 0$.
   Using \eqref{Wt}, \eqref{Wt3}, one shows via integration by parts and our assumptions on $W$ that
\begin{align}
\label{bounds}
     \widetilde{W}^{(\mu)}_i(t),\widetilde{W}^{(\mu)}_{\omega}(t)  \ll_{j} \frac{U^j}{t^j}
\end{align}
    for all integers $\mu \geq 0, j \geq 0$ and all real $t>0$. \newline

   On the other hand, we evaluate $\widetilde{W}_i(0)$ using the polar coordinate to see that
\begin{align} \label{w0}
    \widetilde{W}_i(0) =\pi+O \left( \frac{1}{U} \right).
\end{align}
Similarly, we have
\begin{align} \label{w0'}
    \widetilde{W}_{\omega}(0) =\frac {2\pi}{\sqrt{3}}+O \left( \frac{1}{U} \right) .
\end{align}

\subsection{Analytic behavior of Dirichlet series associated with Gauss sums}
\label{section: smooth Gauss}

    In the proof of Theorem \ref{quadraticmean}, we need to know the analytic behavior of certain Dirichlet series associated with cubic or quartic Gauss sums. For this, we define
\[ G_j(s,k) =\sum_{n \equiv 1 \bmod {(1+i)^3}} \leg {1+i}{n}_j \frac {g_j(k,n)}{N(n)^{s}},  \quad \mbox{for} \; j=2,4 \quad \mbox{and} \quad G_3(s,k)=\sum_{n \equiv 1 \pmod {3}} \leg {1-\omega}{n}_3 \frac {g_3(k,n)}{N(n)^{s}}. \]
Here and in what follows, it is understood that in the first sum above $n$ runs over elements of $\mz[i]$ with $k \in \mz[i]$ and in the second sum $n$ varies over members of $\mz[\omega]$ with $k \in \mz[\omega]$. \newline

   The supplement theorems to cubic and quartic reciprocity laws \eqref{2.05}-\eqref{2.06} imply that $\leg {1-\omega}{n}_3$ is a ray class character $\pmod {9}$ in $\mq(\omega)$ and $\leg {1+i}{n}_4$ is a ray class character $\pmod {16}$ in $\mq(i)$. We shall use Lemma~\ref{Gausssum} to study $G_2(s,k)$ and deduce from a general result of S. J. Patterson \cite[Lemma, p. 200]{P} the following analytic behavior of $G_j(s,k)$, with $j=3,4$.
\begin{lemma}
\label{lem1} For $j=3,4$, the function $G_j(s,k)$ has meromorphic continuation to the half plane with $\Re (s) > 1$.  It is holomorphic in the
region $\sigma=\Re(s) > 1$ except possibly for a pole at $s = 1+1/j$. For any $\varepsilon>0$, letting $\sigma_1 = 3/2+\varepsilon$, then for $\sigma_1 \geq \sigma \geq \sigma_1-1/2$, $|s-(1+1/j)|>1/(2j)$, we have
\[ G_j(s,k) \ll N(k)^{\frac 12(\sigma_1-\sigma+\varepsilon)}(1+t^2)^{\frac {j-1}2(\sigma_1-\sigma+\varepsilon)}, \]
  where $t=\Im(s)$ and the norm is taken in the corresponding number field. Moreover, the residue satisfies
\[ \mathrm{Res}_{s=1+1/j}G_j(s,k) \ll N(k)^{a_j+\varepsilon}, \]
 where $a_3=-1/6$ and $a_4=1/4$.
\end{lemma}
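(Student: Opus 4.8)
The plan is to reduce the statement to the cited result of Patterson \cite[Lemma, p.~200]{P}, which gives precisely this kind of meromorphic continuation, bound, and residue estimate for Dirichlet series built from $r$-th power Gauss sums attached to a number field containing the $r$-th roots of unity. For $j=3$ we work in $\mq(\omega)$ with $r=3$, and for $j=4$ in $\mq(i)$ with $r=4$; in each case the relevant roots of unity lie in the field, so Patterson's framework applies. The first step is bookkeeping: the sum defining $G_j(s,k)$ runs over $n$ primary (i.e.\ $n\equiv 1\bmod (1+i)^3$ or $n\equiv 1\bmod 3$), and the factor $\leg{1+i}{n}_j$ (resp.\ $\leg{1-\omega}{n}_3$) is, by the supplement laws \eqref{2.05}--\eqref{2.06}, a fixed ray class character $\psi$ modulo $16$ (resp.\ modulo $9$). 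So $G_j(s,k)=\sum_{n} \psi(n) g_j(k,n) N(n)^{-s}$ is exactly a twisted cubic/quartic Gauss-sum Dirichlet series of the shape treated in \cite{P}, up to the harmless finitely many Euler factors at primes dividing the conductor of $\psi$ and at $k$.

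The second step is to invoke Patterson's result to get meromorphic continuation of $G_j(s,k)$ to $\Re(s)>1$, holomorphy there except for a possible pole at $s=1+1/j$ (the pole location is dictated by the size $N(\varpi)^{1/2}$ of the leading Gauss sum at a prime together with the normalization: $g_j(1,\varpi)\asymp N(\varpi)^{1/2}$, so the series $\sum g_j(k,n)N(n)^{-s}$ behaves like a zeta function shifted by $1/j$ once one accounts for the metaplectic correction, giving the pole at $1+1/j$). The $k$-dependence and the $t$-dependence of the bound in the convexity-type estimate come directly from Patterson's functional equation / approximate functional equation for these series: on the near-critical strip $\sigma_1-1/2\le\sigma\le\sigma_1$ one interpolates between the absolutely convergent range $\sigma=\sigma_1=3/2+\varepsilon$, where $G_j(s,k)\ll N(k)^{\varepsilon}$ trivially (using $g_j(k,n)\ll N(n)^{1/2+\varepsilon}$, crudely), and the reflected point, where the functional equation converts $G_j$ at $\sigma_1-1/2$ into $G_j$ at $\sigma_1$ at the cost of a gamma-factor ratio of size $(1+t^2)^{(j-1)/2\cdot(1/2+\varepsilon)}$ and a conductor factor $N(k)^{1/2\cdot(1/2+\varepsilon)}$; Phragm\'en--Lindel\"of in the strip (legitimate after excising the disc $|s-(1+1/j)|>1/(2j)$ around the pole) then yields the stated hybrid bound $N(k)^{\frac12(\sigma_1-\sigma+\varepsilon)}(1+t^2)^{\frac{j-1}{2}(\sigma_1-\sigma+\varepsilon)}$. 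The residue bound $\mathrm{Res}_{s=1+1/j}G_j(s,k)\ll N(k)^{a_j+\varepsilon}$ with $a_3=-1/6$, $a_4=1/4$ is likewise read off from Patterson's explicit description of the residue, which is essentially a finite Gauss-sum-type expression in $k$ whose size one estimates by elementary multiplicativity of $g_j(k,\cdot)$ in the first variable (using \eqref{eq:gmult} and Lemma~\ref{Gausssum}-type local evaluations) — the exponent $a_j$ records the average size of $g_j(k,\varpi^h)/N(\varpi^h)^{1+1/j}$ summed over the prime-power part of $k$.

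The main obstacle is not any single computation but making sure Patterson's hypotheses are met and translating his normalization to ours: specifically, (i) confirming that twisting by the ray class character $\psi=\leg{1+i}{\cdot}_j$ (resp.\ $\leg{1-\omega}{\cdot}_3$) stays within the class of Dirichlet series he handles — this is fine because his setup allows an arbitrary auxiliary character of bounded conductor — and (ii) tracking the exact power of $N(k)$ and of $1+t^2$ through his functional equation, since different sources normalize the metaplectic Eisenstein series (and hence the Gauss-sum Dirichlet series) differently. Once the dictionary between his statement and $G_j(s,k)$ is fixed, the continuation, the single possible pole at $1+1/j$, the convexity bound by Phragm\'en--Lindel\"of, and the residue estimate all follow; the case $j=2$ is deliberately excluded here because there the relevant field $\mq(i)$ does not contain a primitive $8$th root of unity issue is moot, but rather because the quadratic Gauss sum has a genuinely different (closed-form) evaluation via Lemma~\ref{Gausssum}, handled separately in the paper.
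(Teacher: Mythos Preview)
Your approach is essentially the same as the paper's: the lemma is not proved from scratch but is obtained as a direct specialization of Patterson's result \cite[Lemma, p.~200]{P}, with the twist by the ray class character $\leg{1+i}{\cdot}_j$ (resp.\ $\leg{1-\omega}{\cdot}_3$) absorbed into his framework. The paper adds one point you do not make explicit: Patterson's statement is formulated only for $k=r^{j-2}$ with $r$ square-free, and one must note that an inspection of his argument shows it goes through for arbitrary $k$; conversely, your sketch of how the convexity bound and residue estimate arise (functional equation plus Phragm\'en--Lindel\"of) is elaboration the paper omits.
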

We note here that Lemma~\ref{lem1} is a special case of the lemma on page 200 of \cite{P} as the latter deals with all $j \geq 1$ and also gives bounds for $G_j(s,k)$ with $\Re(s) > \sigma_1$. Though the lemma in \cite{P} is stated for $k=r^{j-2}$ with $r$ square-free, an inspection of the argument of the proof shows that it is also valid for all $k$.

\section{Proof of Theorem \ref{quadraticmean}}
\label{sec 3}

    We evaluate $S_j(X,Y)$ for $j=2,4$ first. Applying Lemma \ref{Poissonsumformodd}, we see that for $j=2,4$,
\begin{align*}
  S_j(X,Y) &=\frac {X}{2}\sum_{k \in
   \mz[i]}(-1)^{N(k)} \sum_{n \equiv 1 \bmod {(1+i)^3}} \leg {1+i}{n} \frac {g_j(k,n)}{N(n)} \Phi \left( \frac {N(n)}{Y} \right)\widetilde{W}_i\left(\sqrt{\frac {N(k)X}{2N(n)}}\right) \\
   & =\frac {X\widetilde{W}_i(0)}{2}\sum_{n \equiv 1 \bmod {(1+i)^3}} \leg {1+i}{n} \frac {g_j(0,n)}{N(n)} \Phi \left( \frac {N(n)}{Y} \right) \\
   & \hspace*{2cm} +\frac {X}{2}\sum_{\substack {k \in
   \mz[i] \\ k \neq 0}}(-1)^{N(k)} \sum_{n \equiv 1 \bmod {(1+i)^3}} \leg {1+i}{n} \frac {g_j(k,n)}{N(n)} \Phi \left( \frac {N(n)}{Y} \right) \widetilde{W}_i\left(\sqrt{\frac {N(k)X}{2N(n)}}\right) \\
   & =: M_{0,j}+M'_j.
\end{align*}

\subsection{The Term $M_{0,j}$}

    We estimate $M_{0,j}$ first. It follows straight from the definition that $g_j(0,n)=\varphi(n)$ if $n$ is a $j$-th power and $g_j(0,n)=0$ otherwise. Thus
\begin{align*}
  M_{0,j}= \frac {X\widetilde{W}_i(0)}{2}\sum_{\substack {n \equiv 1 \bmod {(1+i)^3} \\ n=\text{a $j$-th power}}}\frac {\varphi(n)}{N(n)}\Phi \left( \frac {N(n)}{Y} \right).
\end{align*}

Using the fact that if $n \equiv 1 \pmod{(1+i)^3}$, then
\begin{align*}
    \frac {\varphi(n)}{N(n)}=\sum_{\substack {d \equiv 1 \bmod {(1+i)^3} \\ d|n }}\frac {\mu_{[i]}(d)}{N(d)},
\end{align*}
    we have
\begin{align*}
   \sum_{\substack {n \equiv 1 \bmod {(1+i)^3} \\ n=\text{a $j$-th power} \\ N(n) \leq x }}\frac {\varphi(n)}{N(n)}=\sum_{\substack {d \equiv 1 \bmod {(1+i)^3} \\ N(d) \leq
   x}}\frac {\mu_{[i]}(d)}{N(d)}\sum_{\substack {n \equiv 1 \bmod {(1+i)^3} \\ n=\text{a $j$-th power}\\ N(n) \leq x/N(d)^j }}1.
\end{align*}

    Note the following result from the Gauss circle problem (we can take $\theta=131/416$, see \cite{Huxley1})
\begin{align*}
  \sum_{\substack{ n \equiv 1 \bmod {(1+i)^3} \\ N(n) \leq x}} 1 =\frac {\pi}{8}x+O \left( x^{\theta} \right),
\end{align*}
   we see that
\[  \sum_{\substack {n \equiv 1 \bmod {(1+i)^3} \\ n=\text{a $j$-th power} \\ N(n) \leq x }}\frac {\varphi(n)}{N(n)} =\frac {\pi}{8}x^{1/j}\sum_{d \equiv 1 \bmod
   {(1+i)^3}}\frac {\mu_{[i]}(d)}{N^2(d)}+O(x^{\theta/j}) =\frac {\pi}{6\zeta_{\mq(i)}(2)}x^{1/j}+O(x^{\theta/j}). \]

    Using this and partial summation, we see that
\[   M_{0,j} = \widetilde{W}_i(0)\frac {\pi XY^{1/j}}{12j\zeta_{\mq(i)}(2)}\int\limits^{\infty}_0\Phi(y)y^{1/j-1} \dif y+O \left( XY^{\theta/j} \right) = \frac {\pi^2 XY^{1/j}}{12\zeta_{\mq(i)}(2)}+O \left( XY^{\theta/j}+\frac {XY^{1/j}}{U} \right), \]
    where the last equality follows from \eqref{w0} and our assumptions on $\Phi$.

\subsection{The Term $M'_j$}
    Now suppose $k \neq 0$. By Mellin inversion, we have
\begin{align*}
    \Phi \left( \frac {N(n)}{Y} \right) \widetilde{W}_i\left(\sqrt{\frac {N(k)X}{2N(n)}}\right) = \frac 1{2\pi i}\int\limits_{(2)} \left( \frac{Y}{N(n)} \right)^s\tilde{f}(s,k) \dif s,
\end{align*}
    where
\begin{align}
\label{quadraticf}
  \tilde{f}(s,k)=\int\limits^{\infty}_{0}\Phi(t)\widetilde{W}_i\left(\sqrt{\frac {N(k)X}{2Yt}}\right) t^{s-1} \dif t.
\end{align}

     Integration by parts and using \eqref{bounds} shows $\tilde{f}(s)$ is a function satisfying the bound for all $\Re(s) > 0$, and integers $D, E>0$,
\begin{align}
\label{boundsforf}
  \tilde{f}(s,k) \ll (1+|s|)^{-D} \left( 1+\sqrt{\frac {N(k)X}{Y}} \right)^{-E+D} U^{E-1}.
\end{align}

    We have
\begin{align*}
   M'_j &= \frac {X}{2}\sum_{\substack {k \in
   \mz[i] \\ k \neq 0}}(-1)^{N(k)} \frac 1{2\pi i}\int\limits_{(2)}\tilde{f}(s,k)Y^sG_j(1+s,k) \dif s.
\end{align*}

    For the case $j=2$, we write $i(1+i)k=k_1 k^2_2$ where $k_1$ is square-free. We may write $k_1=i^a (1+i)^b k'_1$ with $a, b=0$ or $1$, $k'_1 \equiv 1 \pmod {(1+i)^3}$ and we let $\chi_{k_1}=\leg {k_1}{\cdot}$. Similar to the discussions in Section \ref{sec2.4}, we can regard $\chi_{k_1}$ as a Hecke character $\pmod {16k'_1}$ of trivial infinite type. Using this and \eqref{eq:gmult}, we have
\begin{align*}
  G_2(1+s,k) = \sum_{n \equiv 1 \bmod {(1+i)^3}} \frac {g_2\left( (1+i)k,n \right)}{N(n)^{1+s}}=L \left( \frac 12+s, \chi_{k_1} \right) \prod_{(\varpi, 1+i)=1}G_{\varpi}(s,k):=L \left( \frac 12+s, \chi_{k_1} \right) G(s,k),
\end{align*}
   where the product $\varpi$ runs over primary primes and $G_{\varpi}(s,k)$ with $(\varpi, 1+i)=1$ is defined as follows:
\begin{align*}
   G_{\varpi}(s,k)= \left( 1-\frac {\chi_{k_1}(\varpi)}{N(\varpi )^{1/2+s}} \right) \sum^{\infty}_{r=0}\frac {g \left( (1+i)k, \varpi^r \right)}{N(\varpi)^{r(1+s)}}.
\end{align*}

   By Lemma \ref{Gausssum}, we see that for a generic $\varpi \nmid (1+i)k$, $G_{\varpi}(s,k)=1-\frac 1{N(\varpi)^{1+2s}}$, this shows that $G(s,k)$ is holomorphic in
 $\Re(s) > 0$. From our evaluation of $G_{\varpi}(s,k)$ for $\varpi \nmid (1+i)k$ we see that for  $\Re{s} \geq \varepsilon$,
\begin{align*}
   G(s,k) \ll N(k)^{\varepsilon}\prod_{\varpi | k}|G_{\varpi}(s,k)|.
\end{align*}
   Suppose that $\varpi^a \| k$. By the trivial bound $|g(k, \varpi^r)| \leq N(\varpi)^r$ it follows that $|G_{\varpi}(s,k)| \leq (a+1)^2$. We then conclude that for  $\Re{s} \geq \varepsilon$,
\begin{align}
\label{boundsforG}
   G(s,k) \ll N(k)^{\varepsilon}.
\end{align}

   Using this, we see that
\begin{align}
\label{M'_2}
   M'_2 = \frac {X}{2}\sum_{\substack {k \in
   \mz[i] \\ k \neq 0}}(-1)^{N(k)} \frac 1{2\pi i}\int\limits_{(2)}\tilde{f}(s,k)Y^sL \left( \frac 12+s, \chi_{k_1} \right) G(s,k) \dif s.
\end{align}

     We now move the line of integration to the line $\Re(s) = \epsilon$. By our discussions for $j=2$ and Lemma \ref{lem1}, we see that encounter possible poles only at $s = 1/j$. Thus we may write $M'_j = M_{1,j}+R_j$, where
\[  M_{1,j} = \frac {XY^{1/j}}{2}\sum_{\substack {k \in
   \mz[i] \\ k \neq 0}} (-1)^{N(k)} \tilde{f} \left( \frac1{j}, k \right)\text{Res}_{s=1/j} G_j(1+s, k) \;\; \mbox{and} \;\;
   R_j = \frac {X}{2}\sum_{\substack {k \in
   \mz[i] \\ k \neq 0}}(-1)^{N(k)} \frac 1{2\pi i}\int\limits_{(\varepsilon)}\tilde{f}(s,k)Y^sG_j(1+s,k) \dif s.
\]

     For the case $j=2$, we use \eqref{M'_2} to see that we encounter poles only when $k_1 = 1$ (so that $L(s, \chi_{k_1}) = \zeta_{\mq[i]}(s)(1-N(1+i)^{-1/2-s})$) at $s=1/2$. Thus we may write $M'_2 = M_{1,2}+R_2$, where (by an obvious change of notation, writing $i(1+i)k^2$ in place of the corresponding $k$)
\[   M_{1,2} = \frac {XY^{1/2}}{4}\sum_{\substack {k \in
   \mz[i] \\ k \neq 0}} \tilde{f}\left( \frac1{2}, i(1+i)k^2 \right)G \left( \frac 1{2}, i(1+i)k^2 \right)\text{Res}_{s=1/2} \zeta_{\mq(i)} \left( \frac 12+s \right), \]
   and
\[  R_2 = \frac {X}{2}\sum_{\substack {k \in
   \mz[i] \\ k \neq 0}}(-1)^{N(k)} \frac 1{2\pi i}\int\limits_{(\varepsilon)}\tilde{f}(s,k)Y^s L \left( \frac 12+s, \chi_{k_1} \right) G(s,k) \dif s. \]

    Let $c_0=\pi/4$, the residue of $\zeta_{\mq(i)}(s)$ at $s=1$, and using \eqref{quadraticf}, we obtain
\begin{align*}
  M_{1,2} = \frac {c_0XY^{1/2}}{4}\int\limits^{\infty}_{0}\frac {\Phi(t)}{\sqrt{t}}\sum_{\substack {k \in
   \mz[i] \\ k \neq 0}} G \left( \frac 1{2}, i(1+i)k^2 \right) \widetilde{W}\left(\sqrt{\frac {N(k)^2X}{Yt}}\right) \dif t.
\end{align*}
We apply \eqref{bounds} with $j=2$  and \eqref{boundsforG} for all $k$ to
deduce that
\begin{align*}
  M_{1,2} \ll XY^{1/2}\frac {YU}{X}.
\end{align*}
To estimate $R_2$, we shall use the convexity bound that (see \cite[Exercise 3, p. 100]{iwakow}) for $\Re(s) \geq 0$,
\begin{align} \label{convbound}
  L \left( \frac 12+s, \chi_{k_1} \right) \ll \left( N(k_1)(1+|s|^2) \right)^{1/4+\epsilon}.
\end{align}
Using this to together with \eqref{boundsforf} by taking $D=2, E=5$ for all $k$ and \eqref{boundsforG}, we deduce that
\begin{align*}
   R_2 \ll XY^{\epsilon} \frac {Y^{3/2}U^4}{X^{3/2}} \sum_{N(k) \geq 1} \frac {N(k)^{\varepsilon}}{N(k)^{5/4}}  \ll   XY^{\epsilon} \left( \frac {Y}{X} \right)^{3/2}U^4.
\end{align*}
We thus get that
\begin{align*}
  S_2(X,Y)=\frac {\pi^2 XY^{1/2}}{12\zeta_{\mq(i)}(2)}+O \left( \frac {XY^{1/2}}{U}+XY^{\theta/2}+Y^{3/2}U
  + XY^{\epsilon} \left( \frac {Y}{X} \right)^{3/2}U^4 \right).
\end{align*}
The case $j=2$ of Theorem \ref{quadraticmean} follows by taking $U=(X/Y)^{1/2}$. \newline

  Now for $j=4$, we use Lemma~\ref{lem1} and \eqref{boundsforf} with $D=0$, $E=3$ for all $k$ and arrive at
\begin{align*}
  M_{1,4} \ll XY^{1/4}\frac {Y^{3/2}U^2}{X^{3/2}}\sum_{N(k) \geq 1} \frac {N(k)^{1/4+\varepsilon}}{N(k)^{3/2}}  \ll  XY^{1/4} \left( \frac {Y}{X} \right)^{3/2}U^2.
\end{align*}
Similarly, using Lemma \ref{lem1} and \eqref{boundsforf} with $D=2$, $E=5$ for all $k$, we deduce that
\begin{align*}
   R_4 \ll XY^{\epsilon} \frac {Y^{3/2}U^4}{X^{3/2}}\sum_{N(k) \geq 1} \frac {N(k)^{1/4+\varepsilon}}{N(k)^{3/2}} \ll  XY^{\epsilon} \left( \frac {Y}{X} \right)^{3/2}U^4.
\end{align*}
We then conclude that
\begin{align*}
  S_4(X,Y)=\frac {\pi^2 XY^{1/4}}{48\zeta_{\mq(i)}(2)}+O \left( \frac{XY^{1/4}}{U}+XY^{\theta/4}+XY^{1/4} \left( \frac {Y}{X} \right)^{3/2}U^2
  + XY^{\epsilon} \left( \frac {Y}{X} \right)^{3/2}U^4 \right).
\end{align*}
Hence, the case $j=4$ of Theorem \ref{quadraticmean} is proved by recalling that $U=(X/Y)^{1/2}$.

\subsection{Evaluation of $S_3(X,Y)$}
\label{sec 4}

    The evaluation of $S_3(X,Y)$ is similar to that of $S_4(X,Y)$. Thus, we only outline the proof here. We first use Corollary \ref{Poissonsum3free} to get
\begin{align*}
  S_3(X,Y) = M_{0,3}+M'_3,
\end{align*}
    where
\[  M_{0,3}  =\frac {2X\widetilde{W}_{\omega}(0)}{3}\sum_{n \equiv 1 \pmod {3}} \leg {1-\omega}{n}_3 \frac {g_3(0,n)}{N(n)} \Phi \left( \frac {N(n)}{Y} \right) \]
and
\[   M'_3 = \frac {X}{3}\sum_{\substack {k \in \mz[\omega] \\ k \neq 0}} \left( \omega^{N(k)}+\overline{\omega}^{N(k)} \right) \frac 1{2\pi i}\int\limits_{(2)}\tilde{f}(s,k)Y^sG_3(1+s,k) \dif s. \]

    For $M_{0,3}$, we note the following result from counting the lattice points inside an ellipse  (we can take $\theta=131/416$, see \cite{Huxley1})
\begin{align*}
  \sum_{\substack{ n \equiv 1 \pmod {3} \\ N(n) \leq x}} 1 =\frac {2\pi}{9\sqrt{3}}x+O\left( x^{\theta} \right).
\end{align*}

    Using this and partial summation, we see that
\begin{align*}
  M_{0,3} =  \frac {\pi^2 XY^{1/3}}{27\zeta_{\mq(\omega)}(2)}+O \left( XY^{\theta/3}+\frac {XY^{1/3}}{U} \right).
\end{align*}

   For $M'_3$, we move the line of integration to the line $\Re(s) = \epsilon$. We encounter possible poles only at $s = 1/3$.
We use Lemma \ref{lem1} and \eqref{boundsforf} with $D=0, E=2$ for all $k$ to estimate the residues at the poles, and we
use Lemma \ref{lem1} and \eqref{boundsforf} with $D=2, E=5$ for all $k$ to estimate the integration on the line $\Re(s) = \epsilon$ to see that
\begin{align*}
  S_3(X,Y)= \frac {\pi^2 XY^{1/3}}{27\zeta_{\mq(\omega)}(2)}+O \left( \frac {XY^{1/3}}{U}+XY^{\theta/3}+Y^{4/3}U
  +XY^{\epsilon} \left( \frac {Y}{X} \right)^{3/2}U^4 \right).
\end{align*}
This completes the evaluation for $S_3(X,Y)$ by putting in $U=(X/Y)^{1/2}$. \newline

\noindent{\bf Acknowledgments.} P. G. is supported in part by NSFC grant 11371043 and L. Z. by the FRG grant PS43707.  Parts of this work were done when P. G. visited the University of New South Wales (UNSW) in June 2017. He wishes to thank UNSW for the invitation, financial support and warm hospitality during his pleasant stay.  Finally, the authors thank the anonymous referee for his/her many comments and suggestions.

\bibliography{biblio}
\bibliographystyle{amsxport}

\vspace*{.5cm}

\noindent\begin{tabular}{p{8cm}p{8cm}}
School of Mathematics and Systems Science & School of Mathematics and Statistics \\
Beihang University & University of New South Wales \\
Beijing 100191 China & Sydney NSW 2052 Austrlia \\
Email: {\tt penggao@buaa.edu.cn} & Email: {\tt l.zhao@unsw.edu.au} \\
\end{tabular}

\end{document}